\def\Q{{\mathbb Q}}
\def\C{{\mathbb C}}
\def\CC{{\mathcal C}}
\def\cc{{\mathfrak c}}
\def\Gal{\mathrm{Gal}}
\def\Aut{\mathrm{Aut}}
\def\cl{\mathrm{cl}}
\def\supp{\mathrm{supp}}
\def\div{\mathrm{div}}
\def\fchar{\mathrm{char}}
\def\dim{\mathrm{dim}}
                             \def\cl{\mathrm{cl}}
                             \def\supp{\mathrm{supp}}
     \def\W{{\mathfrak W}}
                          \def\rr{{\mathfrak r}}
                          \def\RR{{\mathfrak R}}
\def\a{{\mathfrak a}}
                               \def\b{{\mathfrak b}}
\newtheorem{thm}{Theorem}[section]
\newtheorem{lem}[thm]{Lemma}
\newtheorem{cor}[thm]{Corollary}
\theoremstyle{definition}
\newtheorem{ex}[thm]{Example}
\newtheorem{exs}[thm]{Examples}
\newtheorem{sect}[thm]{}
           \newtheorem{rem}[thm]{Remark}
\begin{document}\date{}

\title{Division by $2$ on odd degree hyperelliptic curves and their jacobians}
\author {Yuri G. Zarhin}
\dedicatory{In memory of V.A. Iskovskikh}

\address{Pennsylvania State University, Department of Mathematics, University Park, PA 16802, USA}

\email{zarhin@math.psu.edu}
\thanks{Partially supported by  Simons Foundation  Collaboration grant  \# 585711. \newline
I've started to write this paper
 during my stay in May-June 2016  at the Max-Planck-Institut f\"ur Mathematik (Bonn, Germany) and finished it during my next visit to the Institute in May-July 2018. The MPIM hospitality and support are gratefully acknowledged.}

\subjclass[2010]{14H40, 14G27, 11G10}

\keywords{Hyperelliptic curves, jacobians, Mumford representations}
\begin{abstract}
Let $K$ be an algebraically closed field of characteristic different from 2, $g$ a positive integer, $f(x)$ a degree $(2g+1)$ polynomial with coefficients in $K$ and without multiple roots, 
$\CC:y^2=f(x)$ the corresponding genus $g$ hyperelliptic curve over K, and $J$ the jacobian of $\CC$. We identify $\CC$ with the image of its canonical embedding into $J$ (the infinite point of $\CC$ goes to the identity element of $J$). 
It is well known that for each $\b \in J(K)$ there are exactly $2^{2g}$ elements $\a \in J(K)$ such that $2\a=\b$.
M. Stoll  constructed an  {\sl algorithm} that provides Mumford representations of all  such $\a$, in terms of the Mumford representation of $\b$. The aim of this paper is to give {\sl explicit formulas}  for Mumford representations   of all  such $\a$, when $\b\in J(K)$ is given by
$P=(a,b) \in \CC(K)\subset J(K)$ in terms of coordinates $a,b$. We also prove that if $g>1$ then $\CC(K)$ does {\sl not} contain torsion points with order between $3$ and $2g$.
\end{abstract}

\maketitle

\section{Introduction}

Let $K$ be an algebraically closed field of characteristic different from $2$.

 Let $g \ge 1$ be an integer.
Let $\CC$ be the smooth projective model of the smooth affine plane $K$-curve
$$y^2=f(x)=\prod_{i=1}^{2g+1}(x-\alpha_i)$$
where $\alpha_1,\dots, \alpha_{2g+1}$ are {\sl distinct} elements of $K$. It is well known that $\CC$ is a genus $g$ hyperelliptic curve over $K$ with precisely one {\sl infinite} point, which we denote by $\infty$.  In other words,
$$\CC(K)=\{(a,b)\in K^2\mid  b^2=\prod_{i=1}^{2g+1}(a-\alpha_i)\}\bigsqcup \{\infty\} .$$
Clearly, $x$ and $y$ are nonconstant rational functions on $\CC$, whose only pole is $\infty$. More precisely, the polar divisor of $x$ is $2 (\infty)$ and the polar divisor of $y$ is $(2g+1)(\infty)$. The zero divisor of $y$ is $\sum_{i=1}^{2g+1} (\W_i)$
where
$$\W_i=(\alpha_i,0) \in \CC(K) \ \text{ for all } \ i=1, \dots , 2g, 2g+1.$$
 We write $\iota$ for the hyperelliptic involution 
$$\iota: \CC \to \CC,  (x,y)\mapsto (x,-y), \ \infty \mapsto\infty.$$
The set  of all fixed points of $\iota$ consists of $\infty$ and all $\W_i$.
It is well known that for each $P \in \CC(K)$ the divisor $(P)+\iota(P)-2(\infty)$ is principal. More precisely, if $P=(a,b)\in \CC(K)$ then $(P)+\iota(P)-2(\infty)$ is the divisor of the rational function $x-a$ on $C$. If $D$ is a divisor on $\CC$ then we write $\supp(D)$ for its {\sl support}, which is a finite subset of $\CC(K)$.

We write $J$ for the jacobian of $\CC$, which is a $g$-dimensional abelian variety over $K$.  If $D$ is a degree zero divisor on $\CC$ then we write $\cl(D)$ for its linear equivalence class, which is viewed as an element of $J(K)$. Elements of $J(K)$ may be described in terms of so called {\bf Mumford representations} (see  \cite[Sect. 3.12]{Mumford}, \cite[Sect. 13.2, pp. 411--415, especially, Prop. 13.4, Th. 13.5 and Th. 13.7]{Wash} and Section \ref{divisors} below.)

We will identify $\CC$ with its image in $J$ with respect to the canonical regular map $\CC \hookrightarrow J$ under which  $\infty$ goes to 
the identity element of $J$. In other words, a point $P \in \CC(K)$ is identified with  $\cl((P)-(\infty))\in J(K)$. Then the action of  $\iota$ on  $\CC(K)\subset J(K)$ coincides with multiplication by $-1$ on $J(K)$.
In particular, the list of points of order  2 on $\CC$ consists of  all $\W_i$.

Since $K$ is algebraically closed, the commutative group $J(K)$ is divisible. It is well known that for each $\b \in J(K)$ there are exactly $2^{2g}$ elements $\a =\frac{1}{2}\b\in J(K)$ such that $2\a=\b$.
M. Stoll \cite[Sect. 5]{Stoll} constructed an {\sl algorithm} that provides Mumford representations of all  such $\a$ in terms of the Mumford representation of $\b$. The aim of this paper is to give {\sl explicit formulas} (Theorem \ref{main}) for Mumford representations   of all  $\frac{1}{2}\b$ when $\b\in J(K)$ is given by
$$P=(a,b) \in \CC(K)\subset J(K)$$ 
on $\CC$, in terms of its coordinates $a,b \in K$. (Here $b^2=f(a)$.) The case
$$\b=\infty=0\in J(K)$$ boils down to a well known description of points of order 2 on the jacobian \cite[Ch. 3a, Sect. 2]{Mumford}; one may easily write down explicitly Mumford representations for the order 2 points, see Examples \ref{Exam}.

The paper is organized as follows. In Section \ref{divisors} we recall basic facts about Mumford representations and obtain auxiliary results about divisors on hyperelliptic curves. 
In particular, we prove (Theorem \ref{notheta}) that if $g>1$ then the only point of $\CC(K)$ that is divisible by two in the {\sl theta divisor} $\Theta$ of $J$  (rather than in $J(K)$) is $\infty$. We also prove that $\CC(K)$ does {\sl not} contain points of order $n$ if $3\le n\le 2g$.  In addition, we discuss torsion points on certain natural subvarieties of $\Theta$ when $J$ has ``large monodromy''.
In Section \ref{division} we describe  explicitly for a given $P=(a,b)\in \CC(K)$ the Mumford representation of $2^{2g}$ divisor classes $\cl(D-g(\infty))$ such that $D$ is an effective degree $g$ reduced divisor on $\CC$ and
$$2\cl(D-g(\infty))=P \in \CC(K)\subset J(K).$$
The description is given in terms of collections of  square roots $r_i=\sqrt{a-\alpha_i}$ ($1\le i\le 2g+1$), whose product $\prod_{i=1}^{2g+1}r_i$ is $-b$. (There are exactly $2^{2g}$ choices of such collections of square roots.)

This paper is a follow up of \cite{BZ} where the (more elementary) case of elliptic curves is discussed.  (See also \cite{Schaefer,Yelton}.)

{\bf Acknowledgements}.  I am grateful to Bjorn Poonen, Michael Stoll and Tatiana Bandman for useful comments. My special thanks go to the referee for thoughtful reading of the manuscript.

\section{Divisors on hyperelliptic curves}
\label{divisors}
As usual, a {\sl monic} polynomial is a polynomial with leading coefficient $1$.

Recall \cite[Sect. 13.2, p. 411]{Wash} that if $D$ is an effective divisor of (nonnegative) degree $m$, whose support does {\sl not} contain $\infty$, then the degree zero divisor $D-m(\infty)$ is called {\sl semi-reduced} if it enjoys the following properties.

\begin{itemize}
\item
If $\W_i$ lies in $\supp(D)$ then it appears in $D$ with multiplicity 1.
\item
If a   point $Q$ of $\CC(K)$ lies in $\supp(D)$ and does not coincide with any of $\W_i$  then $\iota(Q)$ does {\sl not} lie in $\supp(D)$.
\end{itemize}
If, in addition, $m \le g$ then $D-m(\infty)$ is called {\sl reduced}.

Notice that a point of $\CC(K)$ that is {\sl not} one of $\W_i$'s may appear in in a (semi)-reduced divisor with multiplicity $>1$.

It is known (\cite[Ch. 3a]{Mumford}, \cite[Sect. 13.2, Prop. 3.6 on p. 413]{Wash}) that for each $\a \in J(K)$ there exist {\sl exactly one}  nonnegative  $m$ and  (effective) degree $m$ divisor  $D$  such that the degree zero divisor $D-m(\infty)$ is {\sl reduced} and  $\cl(D-m(\infty))=\a$.  (E.g., the zero divisor with $m=0$ corresponds to $\a=0$.)  If 
$$m\ge 1, \ D=\sum_{j=1}^m(Q_j)\ \mathrm{ where }  \ Q_j=(a_j,b_j) \in \CC(K) \ \text{ for all } \  j=1, \dots , m$$
(here $Q_j$ do {\sl not} have to be distinct)
then the corresponding
$$\a=\cl(D-m(\infty))=\sum_{j=1}^m Q_j \in J(K).$$
The {\sl Mumford representation} (\cite[Sect. 3.12]{Mumford}, \cite[Sect. 13.2, pp. 411--415, especially, Prop. 13.4, Th. 13.5 and Th. 13.7]{Wash} 
of $\a \in J(K)$ 
is the pair $(U(x),V(x))$ of polynomials $U(x),V(x)\in K[x]$ that enjoys the following properties.
\begin{itemize}
\item
$$U(x)=\prod_{j=1}^m(x-a_j)$$
is a degree $m$ monic polynomial;
\item
 $V(x)$ has degree $<m=\deg(U)$;
 \item
  the polynomial  $V(x)^2-f(x)$ is divisible by $U(x)$;
\item
 each $Q_j$ is a zero of $y-V(x)$, i.e.,
$$b_j=V(a_j), \ Q_j=(a_j,V(a_j))\in \CC(K) \  \text{ for all }  \ j=1, \dots m.$$
\end{itemize}

  Such a pair always exists,  is unique, and (as we have just seen) uniquely determines not only $\a$ but also divisors $D$  and $D-m(\infty)$.

\begin{exs}
\label{Exam}
\begin{itemize}
\item[(i)]
The case $\a=0$ corresponds to $m=0, D=0$ and the pair $(U(x)=1, V(x)=0)$.
\item[(ii)]
The case 
$$\a=P=(a,b) \in \CC(K)\subset J(K)$$
 corresponds to $m=1, D=(P)$
and the pair $(U(x)=x-a, V(x)=b)$.
\item[(iii)]
Let $m \le g$ be a positive integer and $I$ an $m$-element subset of the $(2g+1)$-element set $\{1, \dots, 2g, 2g+1\}$ of positive integers.  Let us consider a degree $m$ effective divisor
$$D_{m,I}=\sum_{i\in I}(\W_i)$$
on $\CC$.  Then the degree zero divisor $D_{m,I}-m(\infty)$ is reduced and its linear equivalence class $\a_{m,I}:=\cl(D_{m,I}-m(\infty))$ has order 2 in $J(K)$, because
$$2 \cl(D_{m,I}-m(\infty))=\cl\left(\left(\sum_{i\in I}2(\W_i)\right)-2m(\infty)\right)=\div(\prod_{i\in I}(x-\alpha_i).$$
Let us consider the polynomials
$$U(x)=U_{m,I}(x):=\prod_{i\in I}(x-\alpha_i), \ V(x)=V_{m,I}(x):=0.$$
Since $f(x)=\prod_{i=1}^{2g+1}(x-\alpha_i)$ is obviously divisible by $U_{m,I}(x)$,
$$f(x)-V_{m,I}(x)^2=f(x)-0^2=f(x)$$
is  divisible by $U_{m,I}(x)$. It follows that $(U_{m,I}(x),0)$ is the Mumford representation of $\a_{m,I}$, since $\W_i=(\alpha_i,0)$ for all $i$.

Clearly, distinct pairs $(m,I)$ correspond to distinct points $\a_{m,I}$. Notice that the number of all $(m,I)$'s equals $2^{2g}-1$ (one has to subtract $1$, because we exclude $m=0$ and empty $I$). At  the same time, $2^{2g}-1$ is the number of elements of order $2$ in $J(K)$. This implies that every order 2 point in $J(K)$ is of the form for exactly one $(m,I)$. Thus, we obtain the Mumford representations for all nonzero halves of zero in $J(K)$.
\end{itemize}

\end{exs}
 
Conversely, if $U(x)$ is a  monic polynomial of degree $m\le g$ and $V(x)$ a polynomial such that $\deg(V)<\deg(U)$ and $V(x)^2-f(x)$ is divisible by $U(x)$, then there exists exactly one  $\a=\cl(D-m(\infty))$ where $D-m(\infty)$ is a reduced divisor, such that $(U(x),V(x))$ is the Mumford representation  of $\a$.
%$\cl(D-m(\infty))$.

Let $P=(a,b)\in\CC(K)$, i.e.,
$$a,b \in K, \ b^2=f(a)=\prod_{i=1}^n(a-\alpha_i).$$
Recall that our goal is to divide explicitly $P$  by $2$ in $J(K)$, i.e., to give explicit formulas for the {\sl Mumford representation} 
 of {\sl all} $2^{2g}$  divisor classes $\cl(D-m(\infty))$  (with reduced $D-m(\infty)$) such that 
$2D-2m(\infty)$ is linearly equivalent to $(P)-(\infty)$, i.e., the divisor
$2D+\iota(P)$ is linearly equivalent to  $(2m+1)(\infty)$.
%, i.e.,  
%$$2\cl(D-g(\infty))=P \in \CC(K)\subset J(K).$$
(It turns out that each such $D$ has degree $g$ and its support does not contain any of   $\W_i$.)

The following assertion is a simple but useful  exercise in Riemann-Roch spaces (see  Example 4.13 in \cite{StollHC}).

\begin{lem}
\label{key}
Let $D$ be an effective divisor on $\CC$ of  degree $m>0$ such that $m \le 2g+1$ and $\supp(D)$ does not contain $\infty$. Assume that the divisor $D-m(\infty)$ is principal.

\begin{enumerate}
\item[(1)]
Suppose that $m$ is odd.
 Then:
 
 \begin{itemize}
 \item[(i)]
  $m=2g+1$ and there exists exactly one polynomial $v(x)\in K[x]$ such that  the divisor of $y-v(x)$ coincides with $D-(2g+1)(\infty)$. In addition, $\deg(v)\le g$.
    \item[(ii)]
    If $\W_i$ lies in  $\supp(D)$ then it appears in $D$ with multiplicity 1.
    \item[(iii)]
    If $b$ is a nonzero element of $K$ and  $P=(a,b) \in \CC(K)$ lies in  $\supp(D)$ then $\iota(P)=(a,-b)$ does not lie in  $\supp(D)$.
  \end{itemize}
  \item[(2)]
  Suppose that $m=2d$ is even. Then there exists exactly one monic degree $d$ polynomial $u(x)\in K[x]$ such that  the divisor of $u(x)$ coincides with $D-m(\infty)$.  In particular, every point $Q \in \CC(K)$ appears in $D-m(\infty)$ with the same multiplicity as $\iota(Q)$.
  \end{enumerate}
\end{lem}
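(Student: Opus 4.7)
The plan is to write $D-m(\infty)=\div(h)$ for some rational function $h$ on $\CC$, so that $h$ lies in the Riemann--Roch space $L(m(\infty))$ and has pole of order exactly $m$ at $\infty$, and then to pin down the shape of $h$ using an explicit basis of $L(m(\infty))$. Since the only poles of $x$ and $y$ on $\CC$ are at $\infty$, where they have pole orders $2$ and $2g+1$ respectively, I will use the standard basis $\{x^i : 2i\le m\}\cup\{x^j y : 2j+2g+1\le m\}$ of $L(m(\infty))$; the key point is that the pole orders of these basis elements at $\infty$ are pairwise distinct, so the exact pole order of $h$ is read off from its top-degree term.

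For the even case $m=2d$, I will note that $m\le 2g+1$ forces $d\le g$, so the inequality $2j+2g+1\le 2d$ admits no $j\ge 0$, and hence $h$ is a polynomial in $x$ of degree exactly $d$; rescaling to make the leading coefficient $1$ gives the required $u(x)$, with uniqueness immediate because two monic polynomials of the same degree with the same divisor on the affine $x$-line must coincide. The final assertion of (2) is then immediate because $\div(u(x))$ is obviously $\iota$-invariant. For the odd case, the requirement that $h$ have pole of odd order exactly $m$ forces a $y$-monomial to appear in its basis expansion, i.e., $m=2j+2g+1$ for some $j\ge 0$; combined with $m\le 2g+1$, this pins down $m=2g+1$ and $j=0$, and hence $h=c\,y+\tilde v(x)$ with $c\in K^\times$ and $\deg\tilde v\le g$. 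Dividing by $c$ and setting $v(x):=-\tilde v(x)/c$ yields the required form $y-v(x)$; uniqueness will follow by comparing two candidates, forming their ratio $\lambda\in K^\times$, and observing that the relation $(1-\lambda)y=v_1-\lambda v_2$ forces $\lambda=1$ since $y$ is not a polynomial in $x$.

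The remaining statements (ii) and (iii) should drop out from local analysis at $\W_i=(\alpha_i,0)$. The hinge is that $y$ is a local uniformizer there, while the defining equation $y^2=f(x)$ shows that $x-\alpha_i$ vanishes to order $2$, so every polynomial in $x$ has \emph{even} order at $\W_i$. Hence whenever $v(\alpha_i)=0$ the function $v(x)$ has order $\ge 2$ at $\W_i$, while $y$ has order $1$, so $y-v(x)$ has order exactly $1$ there, giving (ii). For (iii), the condition $P=(a,b)\in\supp(D)$ with $b\ne 0$ forces $b=v(a)\ne 0$, so $v(a)\ne -b$ and $\iota(P)=(a,-b)$ is not a zero of $y-v(x)$. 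The only mildly subtle step in the whole argument is the pole-order bookkeeping that isolates $m=2g+1$ in the odd case; everything else is direct inspection using the explicit form of $h$.
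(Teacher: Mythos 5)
Your proof is correct and follows essentially the same route as the paper: both write $h$ with $\div(h)=D-m(\infty)$ as a $K[x]$-linear combination of $1$ and $y$ (your Riemann--Roch basis is just a repackaging of the paper's $h=s(x)y-v(x)$) and exploit the opposite parities of the pole orders of $x^i$ and $x^jy$ at $\infty$ to force $m=2g+1$ in the odd case and $h\in K[x]$ in the even case. The only substantive difference is that for (ii) you give a self-contained local computation at $\W_i$ (using that $y$ is a uniformizer there and $x-\alpha_i$ has order $2$), where the paper simply cites Washington's Proposition 13.2(b); your version is a pleasant, correct replacement for that citation.
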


\begin{proof}
Let $h$ be a rational function on $\CC$, whose divisor coincides with $D-m(\infty)$. Since $\infty$ is the only pole of $h$, the function $h$ is a polynomial in $x,y$ and therefore may be presented as
$h=s(x)y-v(x)$ with  $s,v \in K[x]$.
If $s=0$ then $h$ has at $\infty$ the pole of even order $2\deg(v)$ and therefore $m=2\deg(v)$. 

Suppose that
 $s \ne 0$. Clearly, $s(x)y$ has at $\infty$ the pole of odd order $2\deg(s)+(2g+1) \ge (2g+1)$. So, the orders of the pole for $s(x)y$ and $v(x)$ are distinct, because they have different parity. Therefore the order $m$ of the pole of $h=s(x)y-v(x)$ coincides with
$\max(2\deg(s)+(2g+1), 2\deg(v))\ge 2g+1$.
This implies that $m=2g+1$; in particular, $m$ is odd. It follows that $m$ is {\sl even} if and only if $s(x)=0$, i.e., $h=-v(x)$; in addition, $\deg(v)\le (2g+1)/2$, i.e., $\deg(v)\le g$.  In order to finish the proof of (2), it suffices to divide $-v(x)$ by its leading coefficient and denote the ratio by $u(x)$. (The uniqueness of monic $u(x)$ is obvious.)

Let us prove (1).   
Since $m$ is {\sl odd},
$$m=2\deg(s)+(2g+1)>2\deg(v).$$
 Since $m \le 2g+1$,  we obtain that $\deg(s)=0$, i.e., $s$ is a nonzero element of $K$ and $2\deg(v)< 2g+1$.  The latter inequality means that $\deg(v)\le g$. Dividing $h$ by the constant $s$, we may and will assume that $s=1$ and therefore $h=y-v(x)$ with 
 $$v(x)\in K[x], \ \deg(v) \le g.$$
  This proves (i). (The uniqueness of $v$ is obvious.)
  The assertion (ii)  is contained in Proposition 13.2(b) on  pp. 409-10 of \cite{Wash}.   
 In order to prove (iii),  we just follow arguments on p. 410 of \cite{Wash} (where it is actually proven). Notice that our $P=(a,b)$ is a zero of $y-v(x)$, i.e. $b-v(a)=0$. Since, $b\ne 0$, $v(a)=b \ne 0$ and $y-v(x)$ takes on at $\iota(P)=(a,-b)$ the  value $-b-v(a)=-2b \ne 0$.  This implies that $\iota(P)$ is {\sl not} a zero of $y-v(x)$, i.e., $\iota(P)$ does not lie in $\supp(D)$.
\end{proof}

\begin{rem}
 Lemma \ref{key}(1)(ii,iii) asserts that if $m$ is odd then
the divisor $D-m(\infty)$ is {\sl semi-reduced}. See \cite[the penultimate paragraph on p. 411]{Wash}.  
\end{rem} 

\begin{cor}
\label{bytwo}
Let $P=(a,b)$ be a $K$-point on $\CC$ and $D$ an effective divisor on $\CC$ such that $m=\deg(D)\le g$ and $\supp(D)$ does not contain $\infty$. Suppose that the degree zero divisor $2D+\iota(P)-(2m+1)(\infty)$ is principal. Then:
\begin{itemize}
\item[(i)]
$m=g$ and there exists a polynomial $v_D(x)\in K[x]$ such that $\deg(v_D)\le g$ and the divisor of $y-v_D(x)$ coincides with $2D+\iota(P)-(2g+1)(\infty)$. In particular, $-b=v_D(a)$.
\item[(ii)]
If a point $Q$ lies in  $\supp(D)$ then $\iota(Q)$ does not lie in  $\supp(D)$. In particular, 
\begin{enumerate}
\item
none of $\W_i$ lies in  $\supp(D)$;
\item
$D-g(\infty)$ is reduced.
\end{enumerate}
\item[(iii)]
The point $P$ does not lie in  $\supp(D)$.
\end{itemize}
\end{cor}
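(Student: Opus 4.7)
The plan is to apply Lemma \ref{key}(1) to the effective divisor
$$D' := 2D + \iota(P),$$
which has odd degree $2m+1 \le 2g+1$ and support $\supp(D) \cup \{\iota(P)\}$ disjoint from $\infty$ (since $\iota(P) = (a,-b)$ is affine and $\supp(D)$ avoids $\infty$ by hypothesis). Principality of $D' - (2m+1)(\infty)$ is exactly the hypothesis of the corollary, so the lemma applies. Once its three conclusions are in hand, the three parts of the corollary should fall out by routine multiplicity bookkeeping in $D' = 2D + \iota(P)$.

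First, for part (i): Lemma \ref{key}(1)(i) forces $2m+1 = 2g+1$, hence $m = g$, and produces a unique $v_D(x) \in K[x]$ of degree $\le g$ with $\div(y - v_D(x)) = D' - (2g+1)(\infty) = 2D + \iota(P) - (2g+1)(\infty)$. Since $\iota(P) = (a,-b)$ lies in $\supp(D')$, it is a zero of $y - v_D$, which gives $-b = v_D(a)$.

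Next, for part (ii), I would first rule out Weierstrass points: if some $\W_i$ lay in $\supp(D)$, it would appear in $D' = 2D + \iota(P)$ with multiplicity $\ge 2$, contradicting Lemma \ref{key}(1)(ii). Hence every $Q \in \supp(D)$ has nonzero $y$-coordinate, and Lemma \ref{key}(1)(iii) applied to $Q \in \supp(D')$ gives $\iota(Q) \notin \supp(D') \supseteq \supp(D)$. These two facts are precisely the defining conditions for $D - g(\infty)$ to be semi-reduced, and since $\deg(D) = g$ this divisor is automatically reduced. For (iii), if $P$ lay in $\supp(D)$ then by (ii) we would have $b \ne 0$, and Lemma \ref{key}(1)(iii) applied to $P \in \supp(D')$ would give $\iota(P) \notin \supp(D')$, contradicting the fact that $\iota(P)$ is literally a summand of $D'$.

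I do not anticipate any real obstacle: the whole corollary is essentially a translation of Lemma \ref{key}(1) via the doubling construction $D \mapsto D' = 2D + \iota(P)$. The only subtlety is multiplicity bookkeeping in $D'$, in particular accounting for the possibility that $\iota(P)$ coincides with a point already appearing in $\supp(D)$ when one translates ``$\supp(D')$'' conclusions from the lemma back to conclusions about $\supp(D)$.
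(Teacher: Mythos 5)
Your proposal is correct and follows exactly the paper's route: the paper's own proof is the one-line instruction to apply Lemma \ref{key}(1) to the odd-degree divisor $2D+\iota(P)$ and observe that $\iota(P)$ is a zero of $y-v(x)$ while $\iota(\W_i)=\W_i$. Your multiplicity bookkeeping (Weierstrass points would appear in $2D+\iota(P)$ with multiplicity $\ge 2$, contradicting (1)(ii); then (1)(iii) handles the non-Weierstrass points and part (iii)) is precisely the intended elaboration.
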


\begin{proof}
One has only to apply Lemma \ref{key} to the divisor $2D+\iota(P)$ of {\sl odd} degree $2m+1\le 2g+1$ and notice that $\iota(P)=(a,-b)$ is a zero of $y-v(x)$ while $\iota(\W_i)=\W_i$ for all $i=1, \dots , 2g+1$.
\end{proof}

Let $d \le g$ be a positive integer and $\Theta_d \subset J$ be the image of the regular map
$$\CC^{d} \to J, \ (Q_1, \dots , Q_{d}) \mapsto \sum_{i=1}^{d} Q_i\subset J.$$
It is well known that $\Theta_d$ is an irreducible closed $d$-dimensional subvariety of $J$ that coincides with $\CC$ for $d=1$ and with $J$ if $d =g$; in addition, $\Theta_d\subset\Theta_{d+1}$ for all $d<g$. Clearly, each $\Theta_d$ is stable under multiplication by $-1$ in $J$.
We write $\Theta$ for the $(g-1)$-dimensional {\sl theta divisor} $\Theta_{g-1}$.  

\begin{thm}
\label{notheta}
 %Suppose that $g>1$ and
%let $\Theta \subset J$ be the theta divisor on $J$, which is the image of the regular map
%$$\CC^{g-1} \to J, \ (Q_1, \dots , Q_{g-1}) \mapsto \sum_{i=1}^{g-1} Q_i\subset J.$$
Suppose that $g>1$ and 
let $$\CC_{1/2}:=2^{-1}\CC \subset J$$
 be the preimage of $\CC$ with respect to multiplication by 2 in $J$.  Then the intersection of $\CC_{1/2}(K)$ and $\Theta$ 
consists of points of order dividing  $2$ on $J$. In particular, the intersection of $\CC$ and $\CC_{1/2}$ consists of $\infty$ and all $\W_i$'s.  In other words,
$$\CC \bigcap 2\cdot\Theta=\{0\}.$$
%on $CC$.
\end{thm}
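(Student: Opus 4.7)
The plan is to reduce everything to Corollary \ref{bytwo}, after observing that a point of $\Theta=\Theta_{g-1}$ admits a reduced representative $\cl(D-m(\infty))$ with $m\le g-1$. Given $\a\in\CC_{1/2}(K)\cap\Theta(K)$ with $2\a=P\in\CC(K)$, I would start by writing $\a=\cl(D-m(\infty))$ in its unique reduced form. Because $\a$ lies in the image of $\CC^{g-1}\to J$, one can first express $\a=\cl(D'-m'(\infty))$ with $D'$ effective of degree $m'\le g-1$ and $\supp(D')\not\ni\infty$ (by writing $\a$ as a sum of $g-1$ points of $\CC$ and dropping any that equal $\infty$); the standard reduction procedure, successively subtracting pairs $(Q)+(\iota Q)$ via the principal divisor $\div(x-x(Q))$, only decreases the degree, so $m\le m'\le g-1$.

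Next, I would rule out $P\ne\infty$. If $P=(a,b)$ is a finite point, the relation $2\a=\cl((P)-(\infty))$ says that $2D-(P)-(2m-1)(\infty)$ is principal; adding the principal divisor $\div(x-a)=(P)+(\iota P)-2(\infty)$ shows that $2D+(\iota P)-(2m+1)(\infty)$ is principal as well. Since $\supp(D)\not\ni\infty$ and $m\le g-1\le g$, Corollary \ref{bytwo}(i) applies and forces $m=g$, contradicting $m\le g-1$. Hence $P=\infty=0$, so $2\a=0$ and $\a$ is of order dividing $2$, which is the main assertion.

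The two remaining claims are then formal consequences. Since $g>1$, the inclusions $\CC=\Theta_1\subseteq\Theta_{g-1}=\Theta$ give $\CC\cap\CC_{1/2}\subseteq\Theta\cap\CC_{1/2}$, so by what has just been proved every point of $\CC\cap\CC_{1/2}$ is $2$-torsion, and the $2$-torsion of $J$ sitting on $\CC$ is exactly $\{\infty,\W_1,\dots,\W_{2g+1}\}$, as recalled in Section \ref{divisors}. For $\CC\cap(2\cdot\Theta)=\{0\}$, any $P=2\a\in\CC$ with $\a\in\Theta$ places $\a$ in $\CC_{1/2}\cap\Theta$, so $\a$ is $2$-torsion and $P=2\a=0$.

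The only genuinely nonformal step is the opening bookkeeping --- checking that membership in $\Theta_{g-1}$ really does force $m\le g-1$. Once that is in place, Corollary \ref{bytwo} supplies the required clash between $m=g$ and $m\le g-1$, and both the statement about $\CC\cap\CC_{1/2}$ and the equality $\CC\cap(2\cdot\Theta)=\{0\}$ are immediate.
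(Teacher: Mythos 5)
Your proof is correct and follows essentially the same route as the paper: both arguments hinge on applying Corollary \ref{bytwo} to a representative divisor of degree $m\le g-1$ and deriving a contradiction with the conclusion $m=g$. The only (harmless) difference is that you pass to the unique reduced representative first; this bookkeeping is actually superfluous, since Corollary \ref{bytwo} requires only that $D$ be effective of degree $\le g$ with $\infty\notin\supp(D)$, so one may apply it directly to $\sum_{j}(Q_j)$ after discarding any $Q_j=\infty$, as the paper does.
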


\begin{rem}
The case $g=2$ of Theorem \ref{notheta} was done in \cite[Prop. 1.5]{Box1}
\end{rem}

\begin{proof}[Proof of Theorem \ref{notheta}]
Suppose that $m \le g-1$ is a positive integer and we have $m$  (not necessarily distinct) points $Q_1, \dots Q_m$  of $\CC(K)$ and  a point $P\in \CC(K)$ such that in $J(K)$
$$2\sum_{j=1}^m Q_j=P.$$
We need to prove that $P=\infty$, i.e.,  it is the zero of group law in $J$ and therefore
$\sum_{j=1}^m Q_j$ is an element of order 2 (or 1) in $J(K)$. Suppose that this is not true.  Decreasing $m$ if necessary, we may and will assume that none of $Q_j$ is $\infty$ (but $m$ is still positive and does not exceed $g-1$).  Let us consider the effective degree $m$ divisor $D=\sum_{j=1}^m (Q_j)$ on $\CC$. The equality in $J$ means that the divisors $2[D-m(\infty)]$ and $(P)-(\infty)$  on $\CC$ are linearly equivalent.  This means that the divisor $2D+(\iota(P))-(2m+1)(\infty)$ is {\sl principal}. Now Corollary \ref{bytwo} tells us that $m=g$, which is not the case. The obtained contradiction proves that the intersection of $\CC_{1/2}$ and $\Theta$ consists of points of order 2 and 1.  

Since $g>1$,  $\CC\subset \Theta$ and therefore the intersection of $\CC$ and $\CC_{1/2}$ also consists of points of order 2 or 1, i.e., lies in the union of $\infty$ and all $\W_i$'s.  Conversely, since each $\W_i$  has order  $2$ in $J(K)$ and $\infty$ has order 1, they all lie in $\CC_{1/2}$ (and, of course, in $\CC$).
\end{proof}

\begin{rem}
It is known \cite[Ch. VI, last paragraph of Sect. 11, p. 122]{Serre} that the curve $\CC_{1/2}$ is irreducible. (Its projectiveness and smoothness   follow readily from  the  projectiveness 
of $J$ and $\CC$,
the smoothness of $\CC$ and the \'etaleness of multiplication by 2 in $J$.) See \cite{Flynn} for an explicit description of equations that cut out $\CC_{1/2}$ in a projective space.
\end{rem}

\begin{cor}
\label{smallO}
Suppose that $g>1$. Let $m$  be an integer such that $3 \le m \le 2g$.
Then $\CC(K)$ does not contain a point of order $m$ in $J(K)$. In particular, $\CC(K)$  does not contain  points of order $3$ or $4$.
\end{cor}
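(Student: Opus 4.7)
The plan is to argue by contradiction, supposing that $P \in \CC(K)$ has order $m$ in $J(K)$ with $3 \le m \le 2g$, and to reduce everything to a direct application of Lemma \ref{key}. The key observation is that the effective divisor $D = m(P)$ of degree $m$ has support $\{P\}$ disjoint from $\infty$ (since $P$ has finite order $m \ge 3$, so $P \ne \infty$), the degree satisfies $1 \le m \le 2g \le 2g+1$, and the assumption $m \cdot P = 0$ in $J(K)$ is exactly the statement that the divisor $D - m(\infty) = m(P) - m(\infty)$ is principal on $\CC$. These are precisely the hypotheses of Lemma \ref{key}.

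I would then split into two cases according to the parity of $m$. If $m$ is odd, then $m \in \{3, 5, \dots, 2g-1\}$, and part (1)(i) of Lemma \ref{key} immediately forces $m = 2g+1$, contradicting $m \le 2g-1$. If $m$ is even, then $m \ge 4$, and part (2) of the same lemma applies: every point $Q \in \CC(K)$ appears in the principal divisor $D - m(\infty)$ with the same multiplicity as $\iota(Q)$. In $D = m(P)$ the point $P$ appears with multiplicity $m > 0$, while $\iota(P)$ appears with multiplicity $0$ whenever $\iota(P) \ne P$. The forced equality $\iota(P) = P$ says that $P$ is one of the Weierstrass points $\W_i$, which has order $2$ in $J(K)$, contradicting $m \ge 4$.

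I do not foresee any significant obstacle: both cases reduce to straightforward invocations of the already established Lemma \ref{key}, exploiting the parity constraint in one case and the $\iota$-symmetry of principal divisors on $\CC$ in the other. The entire content of the corollary is packaged into the single choice of divisor $D = m(P)$ to which Lemma \ref{key} can be applied; no recourse to Theorem \ref{notheta} or to Mumford representations is needed.
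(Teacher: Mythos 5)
Your proposal is correct and follows essentially the same route as the paper: both apply Lemma \ref{key} to the principal divisor $m(P)-m(\infty)$, using part (1)(i) to rule out odd $m\le 2g$ and part (2) together with the $\iota$-symmetry of the support to rule out even $m$. The only cosmetic difference is that the paper establishes $P\ne\iota(P)$ up front, whereas you derive $\iota(P)=P$ in the even case and then contradict it via the order of a Weierstrass point; the content is identical.
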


\begin{rem}
The case $g=2$ of Corollary \ref{smallO} was done in \cite[Prop. 2.1]{Box1}
\end{rem}

\begin{proof}[Proof of Corollary \ref{smallO}]
Suppose that such a point say, $P$ does exists.  Clearly, $P$ is neither $\infty$ nor one of $\W_i$, i.e., $P \ne \iota(P)$.
Let us consider the effective degree $m$ divisor $D=m(P)$. Then the divisor $D-m(\infty)$ is principal and its support contains $P$ but does {\sl not} contain $\iota(P)$.

If $m$ is odd then the desired result follows from Lemma \ref{key}(1). Assume that $m$ is even.  By Lemma \ref{key}(2), the support of $D-m(\infty)$ must contain $\iota(P)$, since it contains $P$. This gives us a contradiction that ends the proof.
\end{proof}

\begin{ex}
\label{order2gp1}
Let us assume that $\fchar(K)$ does {\sl not} divide $(2g+1)$. Then for every nonzero $b \in K$ the monic degree $(2g+1)$ polynomial $x^{2g+1}+b^2$ has no multiple roots and  the point $P=(0,b)$ of the genus $g$ hyperelliptic curve
$$\mathcal{C}: y^2=x^{2g+1}+b^2$$
has order $(2g+1)$ on the jacobian $J$ of $\mathcal{C}$. Indeed, the polar divisor of rational function $y-b$ is $(2g+1)(\infty)$ while $P$ is its only {\sl zero}. Since the degree of $\div(y-b)$ is $0$,
$$\div(y-b)=(2g+1)(P)-(2g+1)(\infty)=(2g+1)((P)-(\infty)).$$
This means that the $K$-point
$$P \in \mathcal{C}(K)\subset J(K)$$
has finite order $m$ that divides $2g+1$. Clearly, $m$ is neither 1 nor 2 (since $P \ne \infty$ and $y(P)=b\ne 0$), i.e., $m\ge 3$. If  $m < (2g+1)$ then  $m\le 2g$ and we get a contradiction to Corollary \ref{smallO}. This proves that the order of $P$ is $(2g+1)$.

Notice that  odd degree genus $2$ hyperelliptic curves with points of order $5=2\times 2+1$ are classified in \cite{Box2}.
\end{ex}

\begin{rem}
If $\fchar(K)=0$ and $g>1$ then the famous theorem of M. Raynaud (conjectured by Yu.I. Manin and D. Mumford) asserts that  an arbitrary genus $g$ smooth projective curve over $K$ embedded into its jacobian contains only finitely many torsion points \cite{Raynaud}.  
\end{rem}

The aim of the rest of this section is to obtain an information about torsion points 
on certain subvarieties $\Theta_d$ when $\CC$ has ``large monodromy''. In what follows we use the notation $[?]$ for the {\sl lower}
integral part of a real number $[?]$.

 Let us start with the following assertion.

\begin{thm}
\label{ThetaD}
Suppose that $g>1$ and 
let  $N$ and $k$ be positive integers such that
$$k <N,  \ N+k \le 2g.$$
Let us put
$$d_{(N+k)}=\left[\frac{2g}{N+k}\right].$$
 Let $K_0$ be a subfield of $K$ such that $f(x)\in K_0[x]$.
Let $\a\in J(K)$ lies on $\Theta_{d_{(N+k)}}$. Suppose that there exists a collection of $k$ (not necessarily distinct) field automorphisms 
$$\{\sigma_1, \dots, \sigma_k\}\subset \Aut(K/K_0)$$
 such that $\sum_{l=1}^k\sigma_l(\a)=N\a$ or $-N\a$.
 Then $\a$ has order 1 or 2 in $J(K)$. 
\end{thm}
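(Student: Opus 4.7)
The plan is to assume $\a \ne 0$ and show $2\a = 0$ by extracting a principal divisor to which Lemma~\ref{key} applies. Let $A - e(\infty)$ be the reduced representative of $\a$, so $A$ is an effective divisor of degree $e$ with $\infty \notin \supp(A)$; and since $\a \in \Theta_{d_{(N+k)}}$ we have $1 \le e \le d_{(N+k)}$, whence $m := (N+k)e$ satisfies $m \le 2g$. Because $f \in K_0[x]$ and both $\infty$ and $\iota$ are defined over $K_0$, each $\sigma_l \in \Aut(K/K_0)$ fixes $\infty$, commutes with $\iota$, and permutes the Weierstrass points $\W_i$. Writing $\epsilon \in \{+1,-1\}$ for the sign in the hypothesis, I would set
$$D := NA + \sum_{l=1}^k \tau_l(A), \qquad \tau_l = \begin{cases}\iota\circ\sigma_l & \text{if } \epsilon = 1,\\ \sigma_l & \text{if } \epsilon = -1.\end{cases}$$
Then $D$ is effective of degree $m$, $\infty \notin \supp(D)$, and, using the identification $P \mapsto \cl((P)-(\infty))$ (which turns $\iota$ into multiplication by $-1$), the class in $J$ of $D - m(\infty)$ equals $\epsilon N\a - \sum_l \sigma_l(\a) = 0$. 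Hence $D - m(\infty)$ is principal.

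Apply Lemma~\ref{key}. Since $m \le 2g < 2g+1$, case~(1)(i) rules out $m$ being odd, so $m$ is even and case~(2) forces $\iota(D) = D$ as divisors. Writing out this equality and using that $\iota$ commutes with each $\sigma_l$, common terms cancel and one obtains the divisor identity
$$NB = \epsilon \sum_{l=1}^k \sigma_l(B), \qquad B := A - \iota(A),$$
on $\CC$. Here $B$ has degree zero and its class in $J$ is $2\a$, so the task reduces to showing $B = 0$.

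To finish, I would argue via positive parts of divisors. Reducedness of $A$ ensures that Weierstrass points appear in $A$ with multiplicity at most $1$ (hence cancel in $B$), while any non-Weierstrass $Q \in \supp(A)$ satisfies $\iota(Q) \notin \supp(A)$; thus $B = B^+ - \iota(B^+)$ with $B^+$ effective, supported on the non-Weierstrass part of $\supp(A)$, and with $B^+$ and $\iota(B^+)$ having disjoint supports. Consequently the left-hand side of the identity has positive part $NB^+$ of degree $N\deg B^+$, while the right-hand side is a signed sum of $k$ divisors each of positive-part degree $\deg B^+$, and the elementary inequality $(D_1+D_2)^+ \le D_1^+ + D_2^+$ bounds its positive-part degree by $k\deg B^+$. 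Equating positive-part degrees of the two sides forces $N\deg B^+ \le k\deg B^+$, which combined with $N > k$ gives $\deg B^+ = 0$, so $B = 0$ and $A = \iota(A)$. Reducedness of $A$ then forces $\supp(A) \subset \{\W_1,\dots,\W_{2g+1}\}$, and Examples~\ref{Exam}(iii) identifies $\a$ as a point of order dividing~$2$. The main subtlety I anticipate is the passage from $\iota(D) = D$ to the clean divisor identity $NB = \epsilon\sum_l\sigma_l(B)$ and then verifying that the positive-part degree comparison is a genuine consequence of the strict inequality $k < N$.
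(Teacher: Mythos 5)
Your proof is correct and follows essentially the same route as the paper's: both form the principal divisor $NA+\sum_{l}\tau_l(A)$ of degree $\le 2g$ (with $\tau_l=\iota\sigma_l$ or $\sigma_l$ according to the sign), invoke Lemma~\ref{key}(2) to obtain $\iota$-symmetry of that divisor, and then exploit $k<N$ through a multiplicity count at non-Weierstrass points of the reduced representative. The only difference is in the bookkeeping at the end: the paper compares the multiplicities of a single non-Weierstrass point of maximal multiplicity and of its $\iota$-image to get a contradiction, while you package the same count as a comparison of degrees of positive parts of $B=A-\iota(A)$, which is an equivalent (and equally valid) formulation.
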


\begin{proof}
Clearly, 
$$d_{(N+k)}\le \frac{2g}{N+k}\le \frac{2g}{2+1}<g; \ (N+k)\cdot d_{(N+k)}\le 2g<2g+1.$$
Let us assume that  $2\a\ne 0$ in $J(K)$. We need  to arrive to a contradiction.
There are a positive integer $r\le d_{(N+k)}<g$ and a sequence of points $P_1, \dots, P_r$ of $\CC(K)\setminus\{\infty\}$ such that the linear equivalence class of
 $\tilde{D}:=\sum_{j=1}^r(P_j)-r(\infty)$ equals $\a$.
 We may assume that $r$ is the smallest positive integer that enjoys this property for given $\a$. Then the divisor  $\tilde{D}$ is {\sl reduced}. 
 %%%%%%%%%%%%%%%%%%%%%
 Indeed, if $\tilde{D}$ is {\sl not} reduced then $r\ge 2$ and we may assume without loss of generality that (say) $P_r=\iota(P_{r-1})$, i.e., the divisor $(P_{r-1})+(P_{r})-2(\infty)$ is principal. Since  $\a \ne 0$,  $r >2$ and therefore $\tilde{D}$ is linear equivalent to
 $$\tilde{D}-\left((P_{r-1})+(P_{r})-2(\infty)\right)=   \sum_{j=1}^{r-2}(P_j)-(r-2)(\infty).$$ This contradicts to the minimality of $r$, and this contradiction  proves that $\tilde{D}$ is  reduced. 
 
 %%%%%%%%%%%%%%%%%%%%%%
 
 We may assume that
   (say) $P_1$ does {\sl not} coincides with 
any of $\W_i$ (here we use the assumption that $2\a \ne 0$); we may also assume that $P_1$ has the largest {\sl multiplicity} in $\tilde{D}$
  among $\{P_1, \dots, P_r\}$; let us denote this multiplicity by  $M$. Since $\tilde{D}$ is reduced, none of $P_j$'s coincides with $\iota P_1$.
The divisor $\sigma_l(\tilde{D})=\sum_{j=1}^r(\sigma_l P_j)-r(\infty)$ is also reduced and its linear equivalence class equals  $\sigma_l\a$ for all $l \in \{1, \dots, k\}$. In particular,  
the multiplicity of each $\sigma_l P_j$ in  $\sigma_l(\tilde{D})$ does {\sl not} exceed $M$; similarly, the multiplicity of each $\iota\sigma_l P_j$ in  $\iota\sigma_l(\tilde{D})$ also does {\sl not} exceed $M$ for every $l$. This implies that if $P$ is any point of $C(K)\setminus\{\infty\}$ that does {\sl not} lie in the support of $\tilde{D}$ then its multiplicity in
$N\tilde{D}+\iota\left(\sum_{l=1}^k\sigma_l(\tilde{D})\right) $ is a nonnegative integer that does {\sl not} exceed $kM$; in addition, the multiplicity of $P$
in $N\tilde{D}+\sum_{l=1}^k\sigma_l(\tilde{D})$ is also a nonnegative integer that also  does {\sl not} exceed $kM$. Notice also that $P_1$ lies in the supports of both
$N\tilde{D}+\iota\left(\sum_{l=1}^k\sigma_l(\tilde{D})\right)$ and $N\tilde{D}+\left(\sum_{l=1}^k\sigma_l(\tilde{D})\right)$ and its multiplicities (in both cases) are, at least, $NM$.

Suppose that $\sum_{l=1}^k\sigma_l(\a)=N\a$. Then the divisor
$$N\tilde{D}+\iota\left(\sum_{l=1}^k\sigma_l(\tilde{D})\right) =N\left(\sum_{j=1}^r(P_j)\right)+\sum_{l=1}^k\left(\sum_{j=1}^r(\iota\sigma_lP_j)\right)-r(N+k)(\infty)$$
is a {\sl principal divisor} on $\CC$. Since 
$$m:=r(N+k) \le (N+k)\cdot d_{(N+k)}\le 2g<2g+1,$$
 we are in position to apply Lemma \ref{key}, which tells us right away that $m$ is {\sl even} 
and there is a monic polynomial $u(x)$ of degree $m/2$, whose divisor coincides with $N\tilde{D}+\iota\sum_{l=1}^k\sigma_l(\tilde{D})$. This implies that   any point $Q\in \CC(K)\setminus\{\infty\}$ appears in $N\tilde{D}+\iota\left(\sum_{l=1}^k\sigma_l(\tilde{D})\right)$ with the same (nonnegative) multiplicity as $\iota{Q}$. It follows that $Q=\iota{P_1}$ appears in  $N\tilde{D}+\iota\left(\sum_{l=1}^k\sigma_l(\tilde{D})\right)$ with the same multiplicity as $P_1$.
 On the other hand, since $\iota{P_1}$ does {\sl not} appear in $\tilde{D}$, its multiplicity in $N\tilde{D}+\iota\sum_{l=1}^k\sigma_l(\tilde{D})$ does {\sl not} exceed $kM$.
Since the multiplicity of $P_1$ in $N\tilde{D}+\iota\left(\sum_{l=1}^k\sigma_l(\tilde{D})\right)$ is, at least, $NM$, 
we conclude that $NM\le kM$, which is not the case, since $k<N$.
% while the multiplicity of $\iota P_1$ is, at most, $kM$. This implies that $NM\le kM$, which is not the case, since $k<N$. %Taking into account that $N>1$,
This gives us the desired contradiction. 

If $\sum_{l=1}^k\sigma_l(\a)=-N\a$ then literally the same arguments applied to  the principal divisor
$$N\tilde{D}+\sum_{l=1}^k\sigma_l(\tilde{D}) =N\left(\sum_{j=1}^r(P_j)\right)+\sum_{l=1}^k\left(\sum_{j=1}^r(\sigma_l P_j)\right)-r(N+k)(\infty)$$
also lead to the contradiction.
\end{proof}

%%%%%%%%%%%%%%%%%%%%%%%%%%%%%%%%%%%%%%%%%%%%%%%%%%%%%%%%%%%%%%%

\begin{sect}
Let $K_0$ be a subfield of $K$ such that $f(x)\in K_0[x]$ and $\bar{K}_0$ the algebraic closure of $K_0$ in $K$. (E.g., one may take as $K_0$  the field that is generated over the prime subfield of $K$ by all the coefficients of $f(x)$.)
We write $\Gal(K_0)$ for the absolute Galois group
$$\Gal(K_0)=\Aut(\bar{K}_0/K)$$
of $K_0$.  It is well known that all torsion points of $J(K)$ actually lie in $J(\bar{K}_0)$.

 Let us consider the following  Galois properties of torsion points of $J(K)$.

\begin{itemize}
\item[(M3)]
If $\a\in J(\bar{K}_0)$ has finite order that is a power of  $2$ then there exists $\sigma \in \Gal(K_0)$ such that $\sigma(\a)=3\a$.
\item[(M2)]
If $\b\in J(\bar{K}_0)$ has finite order that is odd then there exists $\tau \in \Gal(K_0)$ such that $\tau(\b)=2\b$.
\item[(M)]
Let $\a,\b \in J(\bar{K}_0)$ be points of finite order such that the order of $\a$ is a power of $2$ and the order of $\b$ is odd.
Then there exist $\sigma_1, \sigma_2 \in \Gal(K_0)$ such that
$$\sigma_1(\a)=-\a, \ \sigma_1(\b)=2\b; \ \sigma_2(\a)=5\a, \ \sigma_2(\b)=2\b.$$
\end{itemize}

\begin{thm}
\label{thetaTor}
\begin{itemize}
\item[(i)]
Suppose that $g \ge 2$ and $J$ enjoys the property (M3). Let  us put $$d_{(4)}=[2g/4]=[g/2].$$  Let
 $\a\in J(K)$ be a torsion point  that lies on $\Theta_{d_{(4)}}$. 
 
 If the order of $\a$ is a power of $2$
 then it is either $1$ or $2$.
\item[(ii)]
Suppose that $g \ge 2$ and $J$ enjoys the property (M2). Let  us put $$d_{(3)}=[2g/3].$$  Let
 $\b\in J(K)$ be a torsion point of  odd order that lies on $\Theta_{d_{(3)}}$. 
 
 Then $\b=0 \in J(K)$.
\item[(iii)]
Suppose that $g \ge 3$ and $J$ enjoys the property (M). Let  us put $$d_{(6)}=[2g/6]=[g/3].$$  Let
 $\cc \in J(K)$ be a torsion point  that lies on $\Theta_{d_{(6)}}$. 
 
  Then the order of $\cc$ is either $1$ or $2$.
\end{itemize}
\end{thm}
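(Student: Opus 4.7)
The plan is to deduce all three parts directly from Theorem \ref{ThetaD} by feeding in the Galois automorphisms supplied by the monodromy hypotheses (M3), (M2), and (M), with appropriately chosen pairs $(N,k)$. In each case one checks the inequalities $k<N$ and $N+k\le 2g$ and verifies that the relevant $d_{(N+k)}$ coincides with the $\Theta_d$ in which the torsion point lies; the conclusion of Theorem \ref{ThetaD} (order $1$ or $2$) then, combined with the parity of the order, yields the stated result.

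For (i), suppose $\a\in J(K)\cap\Theta_{d_{(4)}}$ has $2$-power order. Property (M3) produces $\sigma\in\Gal(K_0)$ with $\sigma(\a)=3\a$. Take $N=3$, $k=1$, $\sigma_1=\sigma$; then $k<N$, $N+k=4\le 2g$ since $g\ge 2$, and $d_{(N+k)}=[2g/4]=[g/2]=d_{(4)}$. Theorem \ref{ThetaD} gives $2\a=0$, and since $\a$ has $2$-power order its order is $1$ or $2$.

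For (ii), suppose $\b\in J(K)\cap\Theta_{d_{(3)}}$ has odd order. Property (M2) gives $\tau$ with $\tau(\b)=2\b$. Apply Theorem \ref{ThetaD} with $N=2$, $k=1$, $\sigma_1=\tau$: we have $k<N$, $N+k=3\le 2g$ since $g\ge 2$, and $d_{(N+k)}=[2g/3]=d_{(3)}$. Theorem \ref{ThetaD} forces $2\b=0$, and oddness of the order forces $\b=0$.

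For (iii), write $\cc=\a+\b$ with $\a$ of $2$-power order and $\b$ of odd order; this decomposition is unique and both summands lie in $J(\bar K_0)$. Applying property (M) to $(\a,\b)$ yields $\sigma_1,\sigma_2\in\Gal(K_0)$ satisfying the four relations listed. Then
$$\sigma_1(\cc)+\sigma_2(\cc)=(-\a+2\b)+(5\a+2\b)=4\a+4\b=4\cc.$$
Take $N=4$, $k=2$; then $k<N$, and $N+k=6\le 2g$ because $g\ge 3$, with $d_{(N+k)}=[2g/6]=[g/3]=d_{(6)}$. Theorem \ref{ThetaD} applied to $\cc$ with these $\sigma_1,\sigma_2$ gives that $\cc$ has order $1$ or $2$, as required.

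There is essentially no obstacle beyond arranging the bookkeeping: all the real work is done inside Theorem \ref{ThetaD}, and the only slightly nontrivial step is the combination $\sigma_1(\cc)+\sigma_2(\cc)=4\cc$ in (iii), which is precisely the reason property (M) is formulated with the specific multipliers $-1,5,2,2$. The hypotheses $g\ge 2$ in (i),(ii) and $g\ge 3$ in (iii) are exactly what is needed to ensure $N+k\le 2g$ in each application.
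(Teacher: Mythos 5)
Your proof is correct and follows essentially the same route as the paper: reduce each part to Theorem \ref{ThetaD} with $(N,k)=(3,1)$, $(2,1)$, and $(4,2)$ respectively, using the automorphisms supplied by (M3), (M2), and (M), and in (iii) the same $2$-primary/odd decomposition and the computation $\sigma_1(\cc)+\sigma_2(\cc)=4\cc$. The only detail the paper makes explicit that you leave implicit is the preliminary reduction to $K=\bar{K}_0$ (so that $\Gal(K_0)=\Aut(K/K_0)$, matching the hypothesis of Theorem \ref{ThetaD}), which is justified because all torsion points of $J(K)$ lie in $J(\bar{K}_0)$.
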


\begin{rem}
In the case of $g=2$ an analogue of Theorem \ref{thetaTor}(i,ii) was earlier proven in \cite[Cor. 1.6]{Box1}.

\end{rem}

\begin{proof}[Proof of Theorem \ref{thetaTor}]
Since all torsion points of $J(K)$ lie in $J(\bar{K}_0)$,
 we may assume that $K=\bar{K}_0$ and therefore $\Gal(K_0)=\Aut(K/K_0)$.
In the first two cases the assertion follows readily from
 Theorem \ref{ThetaD} with $N=3, k=1$ in the case (i)
and with $N=2,k=1$ in the case (ii).    Let us do the case (iii).
We have $\cc=\a+\b$ where the order of $\a$ is odd and the order of $\b$ is a power of $2$.
There  exist $\sigma_1, \sigma_2 \in \Gal(K_0)=\Aut(K/K_0)$ such that
$$\sigma_1(\a)=-\a, \ \sigma_1(\b)=2\b; \ \sigma_2(\a)=5\a, \ \sigma_2(\b)=2\b.$$
This implies that
$$\sigma_1(\cc)+\sigma_2(\cc)=\sigma_1(\a)+\sigma_1(\b)+\sigma_2(\a)+\sigma_2(\b)=-\a+2\b+5\a+2\b=4(\a+\b)=4\cc,$$
i.e., $\sigma_1(\cc)+\sigma_2(\cc)=4\cc$.
Now the desired result follows from
 Theorem \ref{ThetaD} with $N=4, k=2$.
\end{proof}

\begin{ex}
Suppose that $g>1$ and $K$ is the field $\C$ of complex numbers, $\{\alpha_1, \dots, \alpha_{2g+1}\}$ is a $(2g+1)$-element set of   algebraically independent transendental complex numbers
and $K_0=\Q(\alpha_1, \dots, \alpha_{2g+1})$ where $\Q$ is the field of rational numbers. It follows from results of B. Poonen and M. Stoll \cite[Th. 7.1 and its proof]{PoonenStoll} and J. Yelton
\cite[Th. 1.1 and Prop. 2.2]{Yelton}  that the jacobian $J$ of the  {\sl generic} hyperelliptic curve
$$\CC: y^2=\prod_{i=1}^{2g+1}(x-\alpha_i)$$
enjoys the following properties.

{\sl Let us choose   odd integers  $(2n_1+1)$ and  $(2n_2+1)$ and nonnegative integers  $m_1$ and $m_2$.
Suppose that $\a,\b \in J(\bar{K}_0)$ be points of finite order such that the order of $\a$ is a power of $2$ and the order of $\b$ is odd.
Then there exist $\sigma_1, \sigma_2 \in \Gal(K_0)$ such that}
$$\sigma_1(\a)=(2n_1+1)\a, \ \sigma_1(\b)=2^{m_1}\b; \ \sigma_2(\a)=(2n_2+1)\a, \ \sigma_2(\b)=2^{m_2}\b.$$

Choosing $n_1=1$, we obtain that
 $J$ enjoys the property (M3). Choosing $m_1=1$, we obtain that
 $J$  enjoys the property  (M2). Choosing  
$$n_1=1, n_2=2, \  m_1=m_2=1,$$ 
we obtain that $J$ enjoys the property (M).
%This implies that $J$ enjoys the properties (M3), (M2) and (M).
It follows from Theorem \ref{thetaTor} that torsion points of $J(\C)$ enjoy the following properties.

\begin{itemize}
\item[(i)]
 Any torsion point $\a \in J(\C)$ that lies on  $\Theta_{[g/2]}$ and has  order that is a power of $2$ actually has order $1$ or $2$.
 \item[(ii)]
 If $\b \in J(\C)$ is a torson point  of odd order that lies on  $\Theta_{[2g/3]}$ then  $\b=0\in  J(\C)$.
 \item[(iii)]
 Let $g \ge 3$.  Then any  torsion point $\cc\in J(\C)$ that lies on  $\Theta_{[g/3]}$  has order 1 or 2.
 \end{itemize}

 Notice that B. Poonen and M. Stoll \cite[Th. 7.1]{PoonenStoll}  proved that  the only complex points of finite order in $J(\C)$ that lie
 on $\CC=\Theta_1$ are points of order 1 or 2.  On the other hand, it is well known 
 %(see, e.g., \cite{ZarhinMRL00}) 
 that $J$ is a simple
 complex abelian variety. Now a theorem of Raynaud \cite{Raynaud2} implies that the set of torsion  points on the theta divisor $\Theta=\Theta_{g-1}$ 
 (actually, on every proper closed subvariety) of $J$ 
 is finite.
\end{ex}

\end{sect}

\section{Division by 2}
\label{division}
If $n$ and $i$  are positive integers and $\mathbf{r}=\{r_1, \dots, r_n\}$ is a sequence of $n$ elements $r_i\in K$ then we write
$$\mathbf{s}_i(\mathbf{r})=\mathbf{s}_i(r_1, \dots, r_n)\in K$$
for the $i$th basic symmetric function in $r_1, \dots, r_n$. If we put $r_{n+1}=0$ then $\mathbf{s}_i(r_1, \dots, r_n)=\mathbf{s}_i(r_1, \dots, r_n, r_{n+1})$.

Suppose we are given a point
$$P=(a,b) \in \CC(K) \subset J(K).$$
Since $\dim(J)=g$, 
there are exactly $2^{2g}$ points $\a \in J(K)$ such that
$$P=2\a \in J(K).$$
Let us choose such an $\a$.
Then there is exactly one effective divisor 
$$D=D(\a)\eqno(1)$$
 of positive degree $m$ on $\CC$ such that  $\supp(D)$ does {\sl not} contain $\infty$, the divisor $D-m(\infty)$ is reduced, and
 $$m \le g, \ \cl(D-m (\infty))=\a.$$ It follows that the divisor $2D+(\iota(P))-(2m+1)(\infty)$ is {\sl principal} and, thanks to  Corollary \ref{bytwo},  $m=g$ and $\supp(D)$ does {\sl not} contains 
  any of $\W_i$. (In addition, $D-g(\infty)$ is reduced.) 
Then  degree $g$ effective divisor 
 $$D=D(\a)=\sum_{j=1}^{g}(Q_j)\eqno(2)$$
    with
$Q_i=(c_j,d_j)\in \CC(K)$.  Since none of $Q_j$ coincides with any of $\W_i$,
$$c_j \ne \alpha_i \ \forall i,j.$$
By  Corollary \ref{bytwo}, there is a polynomial $v_D(x)$ of degree $ \le g$ such that the degree zero divisor $$2D+(\iota(P))-(2g+1)(\infty)$$ is the divisor of $y-v_D(x)$. Since  $\iota(P)=(a,-b)$  and all $Q_j$'s  are zeros of $y-v_D(x)$,
$$b=-v_D(a), \ d_j=v_D(c_j) \  \text{ for all } \ j=1, \dots , g.$$
It follows from Proposition 13.2 on pp. 409--410 of \cite{Wash} that 
$$\prod_{i=1}^{2g+1}(x-\alpha_i)-v_D(x)^2=f(x)-v_D(x)^2=(x-a)\prod_{j=1}^g (x-c_j)^2.\eqno(3)$$
In particular, $f(x)-v_D(x)^2$ is divisible by
$$u_D(x):=\prod_{j=1}^g (x-c_j).\eqno(4)$$

\begin{rem}
Summing up:
$$D=D(\a)=\sum_{j=1}^{g}(Q_j), \ Q_j=(c_j,v_D(c_j)) \  \text{ for all } \ j=1, \dots , g$$
and the degree $g$ monic polynomial $u_D(x)=\prod_{j=1}^g (x-c_j)$ divides $f(x)-v_D(x)^2$.  
%By Prop. 13.4 on p. 412 of \cite{Wash}, this implies that reduced $D-g(\infty)$ %coincides with the gcd of the divisors of $u_D(x)$ and  $y-v_D(x)$. 
Thus (see  the beginning of Section \ref{divisors}),
the  pair $(u_D,v_D)$ is the Mumford representation of $\a$
if $$\deg(v_D)<g=\deg(u_D).$$ This is not always the case: it may happen that $\deg(v_D)=g=\deg(u_D)$ (see below). However, if we replace $v_D(x)$ by its remainder with respect to the division by $u_D(x)$ then we get the Mumford representation  of $\a$ (see below).
\end{rem}

If in (3) we put $x=\alpha_i$ then we get
$$-v_D(\alpha_i)^2=(\alpha_i-a)\left(\prod_{j=1}^g(\alpha_i-c_j)\right)^2,$$
i.e.,
$$v_D(\alpha_i)^2=(a-\alpha_i)\left(\prod_{j=1}^g(c_j-\alpha_i)\right)^2 \  \text{ for all }  \ i=1, \dots , 2g, 2g+1.$$
Since none of $c_j-\alpha_i$ vanishes, we may define
$$
r_i=r_{i,D}:=\frac{v_D(\alpha_i)}{\prod_{j=1}^g(c_j-\alpha_i)}=(-1)^g  \frac{v_D(\alpha_i)}{u_D(\alpha_i)} \eqno(5)$$
with
$$r_i^2=a-\alpha_i \  \text{ for all }  \ i=1, \dots , 2g+1 \eqno(6)$$
and
$$\alpha_i= a-r_i^2, \  c_j-\alpha_i=r_i^2-a+c_j \  \text{ for all }  \ i=1, \dots ,  2g, 2g+1; j=1, \dots , g.$$
Clearly, all $r_i$'s are {\sl distinct} elements of $K$, because their squares are obviously distinct. (By the same token, $r_{j_1} \ne \pm r_{j_2}$ if $j_1\ne j_2$.) Notice that
$$\prod_{i=1}^{2g+1}r_i= \pm b, \eqno(7)$$
because
$$b^2=\prod_{i=1}^{2g+1}(a-\alpha_i)=\prod_{i=1}^{2g+1} r_i^2.\eqno(8)$$
Now we get
$$r_i=\frac{v_D(a-r_i^2)}{\prod_{j=1}^g(r_i^2-a+c_j)},$$
i.e.,
$$r_i \prod_{j=1}^g(r_i^2-a+c_j)-v_D(a-r_i^2)=0 \  \text{ for all }  \ i=1, \dots 2g, 2g+1.$$
This means that the degree $(2g+1)$ {\sl monic} polynomial (recall that $\deg(v_D)\le g$)
$$h_{\mathbf{r}}(t):=t \prod_{j=1}^g (t^2-a+c_j) -v_D(a-t^2)$$
has $(2g+1)$ {\sl distinct} roots $r_1, \dots, r_{2g+1}$. This means that
$$h_{\mathbf{r}}(t)= \prod_{i=1}^{2g+1}(t-r_i).$$
Clearly, $t \prod_{j=1}^g (t^2-a+c_j)$ coincides with the {\sl odd part} of $h_{\mathbf{r}}(t)$ while $-v_D(a-t^2)$ coincides with the {\sl even part} of $h_{\mathbf{r}}(t)$. In particular, if we put $t=0$ then we get
$$ (-1)^{2g+1}\prod_{i=1}^{2g+1}r_i=-v_D(a)=b,$$
i.e.,
$$\prod_{i=1}^{2g+1}r_i=- b.\eqno(9)$$
Hereafter 
$$\mathbf{r}=\mathbf{r}_D:=(r_1, \dots , r_{2g+1}) \in K^{2g+1}.$$
Since $$\mathbf{s}_i(\mathbf{r})=\mathbf{s}_i(r_1, \dots , r_{2g+1})$$  is the $i$th basic symmetric function in 
$r_1, \dots , r_{2g+1}$, 
$$h_{\mathbf{r}}(t)=t^{2g+1}+\sum_{i=1}^{2g+1} (-1)^{i}\mathbf{s}_i(\mathbf{r}) t^{2g+1-i}=\left[t^{2g+1}+\sum_{i=1}^{2g} (-1)^{i}\mathbf{s}_i(\mathbf{r}) t^{2g+1-i}\right]+b.$$
(Since
$$\mathbf{s}_{2g+1}(\mathbf{r})=\prod_{i=1}^{2g+1} r_i=-b,$$
 the constant term of $h_{\mathbf{r}}(t)$ equals $b$.)
Then
$$t\prod_{j=1}^g (t^2-a+c_j)=t^{2g+1}+\sum_{j=1}^g \mathbf{s}_{2j}(\mathbf{r})t^{2g+1-2j},$$
$$-v_D(a-t^2)=\left[-\sum_{j=1}^{g} \mathbf{s}_{2j-1}(\mathbf{r}) t^{2g-2j+2}\right]+b.$$
It follows that
$$\prod_{j=1}^g (t-a+c_j)=t^g+\sum_{j=1}^g \mathbf{s}_{2j}(\mathbf{r})t^{g-j},$$
$$v_D(a-t)=\sum_{j=1}^{g} \mathbf{s}_{2j-1}(\mathbf{r}) t^{g-j+1}-b.$$
This implies that 
$$v_D(t)=\left[\sum_{j=1}^{g} \mathbf{s}_{2j-1}(\mathbf{r}) (a-t)^{g-j+1}\right]-b.\eqno(10)$$
It is also clear that if we consider the degree $g$ monic polynomial
$$U_{\mathbf{r}}(t):=u_D(t)=\prod_{j=1}^g (t-c_j)$$
then
$$
U_{\mathbf{r}}(t)=(-1)^g \left[(a-t)^g+\sum_{j=1}^g \mathbf{s}_{2j}(\mathbf{r})(a-t)^{g-j}\right]. \eqno(11)
$$
 Recall that $\deg(v_D) \le g$ and notice that the coefficient of $v_D(x)$ at $x^g$ is $(-1)^{g}\mathbf{s}_1(\mathbf{r})$. This implies that  the polynomial
$$V_{\mathbf{r}}(t):=v_D(t)-(-1)^{g}\mathbf{s}_1(\mathbf{r}) U_{\mathbf{r}}(t)=$$
$$\left[\sum_{j=1}^{g} \mathbf{s}_{2j-1}(\mathbf{r}) (a-t)^{g-j+1}\right]-b
-
\mathbf{s}_1(\mathbf{r})\left[(a-t)^g+\sum_{j=1}^g \mathbf{s}_{2j}(\mathbf{r})(a-t)^{g-j}\right]=$$
$$\sum_{j=1}^g \left(\mathbf{s}_{2j+1}(\mathbf{r})-\mathbf{s}_1(\mathbf{r})\mathbf{s}_{2j}(\mathbf{r})\right)(a-t)^{g-j}
\eqno(12)
$$
has degree $<g$, i.e.,  
$$\deg(V_{\mathbf{r}})<\deg(U_{\mathbf{r}})=g.$$
 Clearly, $f(x) - V_{\mathbf{r}}(x)^2$ is still divisible by $U_{\mathbf{r}}(x)$, because $u_D(x)=U_{\mathbf{r}}(x)$ divides both
$f(x)-v_D(x)^2$ and $v_D(x)- V_{\mathbf{r}}(x)$.
On the other hand,
$$d_j=v_D(c_j)=V_{\mathbf{r}}(c_j) \  \text{ for all }  j=1, \dots g,$$
because $U_{\mathbf{r}}(x)$ divides  $v_D(x)- V_{\mathbf{r}}(x)$ and vanishes at all $c_j$. Actually, $\{c_1, \dots, c_g\}$ is the list of all roots (with multiplicities) of $U_{\mathbf{r}}(x)$. So,
$$D=D(\a)=\sum_{j=1}^{g}(Q_j), \ Q_j=(c_j,v_D(c_j))=(c_j,V_{\mathbf{r}}(c_j))  \ \forall j=1, \dots , g.$$
This implies (again via the beginning of Section \ref{divisors})
%Prop. 13.4 on p. 412 of \cite{Wash}) that reduced $D-g (\infty)$ coincides with the %gcd of the divisors of $U_{\mathbf{r}}(x)$ and  $y-V_{\mathbf{r}}(x)$. It follows 
that  the pair  $(U_{\mathbf{r}}(x), V_{\mathbf{r}}(x))$ is the {\sl Mumford representation}  of  $\cl(D-g (\infty))=\a$. 
So,  the formulas (11) and (12) give us an explicit construction of ($D(\a)$ and) $\a$ in terms of $\mathbf{r}=(r_1, \dots, , r_{2g+1})$ for each of $2^{2g}$ choices of $\a$ with $2\a=P\in J(K)$.  
On the other hand,  in light of (6)-(8), there is exactly the same number $2^{2g}$ of  choices of  collections of square roots $\sqrt{a-\alpha_i}$ ($1\le i \le 2g$) with product  $-b$. Combining it with (9), we obtain  that for each choice
of square roots $\sqrt{a-\alpha_i}$'s  with $\prod_{i=1}^{2g+1}\sqrt{a-\alpha_i}=-b$ there is precisely one $\a \in J(K)$ with $2\a=P$ such that the corresponding $r_i$ defined by (5) coincides with  chosen $\sqrt{a-\alpha_i}$ for all $i=1, \dots , 2g+1$, and the Mumford  representation $(U_{\mathbf{r}}(x), V_{\mathbf{r}}(x))$  for this $\a$ is given by  formulas (11)-(12). This gives us the following assertion.

\begin{thm}
\label{main}
Let $P=(a,b)\in \CC(K)$. Then the $2^{2g}$-element set 
$$M_{1/2,P}:=\{\a \in J(K)\mid 2\a=P\in \CC(K)\subset J(K)\}$$
can be described as follows.  Let $\RR_{1/2,P}$ be the set of all $(2g+1)$-tuples $\rr=(\rr_1, \dots , \rr_{2g+1})$ of elements of $K$ such that
$$\rr_i^2=a-\alpha_i \  \text{ for all }  \ i=1, \dots , 2g, 2g+1;  \ \prod_{i=1}^{2g+1}\rr_i=-b.$$ 
Let $\mathbf{s}_i(\rr)$ be the $i$th basic symmetric function in  $\rr_1, \dots , \rr_{2g+1}$. Let us put
$$
U_{\rr}(x)=(-1)^g \left[(a-x)^g+\sum_{j=1}^g \mathbf{s}_{2j}(\rr)(a-x)^{g-j}\right],$$
$$V_{\rr}(x)=\sum_{j=1}^g \left(\mathbf{s}_{2j+1}(\rr)-\mathbf{s}_1(\rr)\mathbf{s}_{2j}(\rr)\right)(a-x)^{g-j}.$$
Then there is a natural bijection between  $\RR_{1/2,P}$ and $M_{1/2,P}$ such that $\rr \in  \RR_{1/2,P}$  corresponds to $\a_{\rr}\in M_{1/2,P}$ with Mumford representation  $(U_{\rr},V_{\rr})$.  
More explicitly, if $\{c_1, \dots, c_g\}$ is the list of all $g$ roots (with multiplicities) of $U_{\rr}(x)$ then $\rr$ corresponds to
$$\a_{\rr}=\cl(D-g(\infty))\in J(K), \ 2\a_{\rr}=P$$
where the divisor
$$D=D(\a_{\rr})=\sum_{j=1}^g (Q_j), \ Q_j=(c_j,V_{\rr}(c_j))\in \CC(K) \  \text{ for all }  \ j=1, \dots, g.$$
In addition, none of $\alpha_i$ is a root of $U_{\rr}(x)$ (i.e., the polynomials $U_{\rr}(x)$ and $f(x)$ are relatively prime) and
$$\rr_i=\mathbf{s}_1(\rr)+(-1)^g\frac{V_{\rr}(\alpha_i)}{U_{\rr}(\alpha_i)}
 \  \text{ for all }  \ i=1, \dots , 2g, 2g+1.$$
\end{thm}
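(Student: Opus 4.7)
The strategy is to organize the preceding derivation into mutually inverse maps $\Phi \colon M_{1/2,P} \to \RR_{1/2,P}$ and $\Psi \colon \RR_{1/2,P} \to M_{1/2,P}$, with $\Psi(\rr)$ given by the Mumford pair $(U_{\rr}, V_{\rr})$. Since both sets have cardinality $2^{2g}$, it suffices to construct $\Psi$, show it lands in $M_{1/2,P}$, and verify $\Phi \circ \Psi = \mathrm{id}$.

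The forward map $\Phi$ is essentially the computation already carried out. Given $\a \in M_{1/2,P}$, Corollary \ref{bytwo} produces the reduced degree $g$ divisor $D(\a)$ together with a polynomial $v_D(x)$ of degree $\le g$ satisfying $\div(y - v_D(x)) = 2D(\a) + (\iota(P)) - (2g+1)(\infty)$; the identity $f(x) - v_D(x)^2 = (x-a)u_D(x)^2$ of (3) forces $v_D(\alpha_i)^2 = (a - \alpha_i)u_D(\alpha_i)^2$ with $u_D(\alpha_i) \ne 0$ (since $\supp D(\a)$ avoids all $\W_i$). Defining $r_i := (-1)^g v_D(\alpha_i)/u_D(\alpha_i)$ gives $r_i^2 = a - \alpha_i$, and $\prod_i r_i = -v_D(a) = -b$ follows by reading off the constant term of $h_{\rr}(t) := t\prod_j(t^2 - a + c_j) - v_D(a - t^2) = \prod_i(t - r_i)$. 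Put $\Phi(\a) := (r_1, \dots, r_{2g+1})$.

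Reverse map $\Psi$: given $\rr \in \RR_{1/2,P}$, form $h_{\rr}(t) := \prod_i(t - r_i)$, split it into its odd part $A(t)$ (of leading term $t^{2g+1}$) and even part $B(t)$, and factor $A(t) = t\prod_j(t^2 - \xi_j)$ over the algebraically closed field $K$. Setting $c_j := a - \xi_j$ and expanding in powers of $a - x$ gives $U_{\rr}(x) = \prod_j(x - c_j)$ in the form (11); reading off $B(t) = -v_D(a - t^2)$ and substituting $x = a - t^2$ recovers formula (10) for $v_D(x)$. The parity identity $A(t)^2 - B(t)^2 = -h_{\rr}(t)h_{\rr}(-t) = \prod_i(t^2 - (a - \alpha_i))$ translates, under $x = a - t^2$, into the polynomial equation $f(x) - v_D(x)^2 = (x - a)U_{\rr}(x)^2$. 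Defining $V_{\rr} := v_D - (-1)^g \mathbf{s}_1(\rr) U_{\rr}$ as in (12), one checks $\deg V_{\rr} < g$ (the coefficient of $(a-x)^g$ in $v_D$ is $\mathbf{s}_1(\rr)$) and $U_{\rr} \mid f - V_{\rr}^2$ (since $U_{\rr}$ divides both $f - v_D^2$ and $v_D - V_{\rr}$), so $(U_{\rr}, V_{\rr})$ is a valid Mumford representation of a unique $\a_{\rr} \in J(K)$ associated with reduced divisor $D_{\rr} = \sum_j (c_j, V_{\rr}(c_j))$. The zero divisor of $y - v_D(x)$ on $\CC$ is $(\iota(P)) + 2\sum_j (c_j, v_D(c_j)) = (\iota(P)) + 2 D_{\rr}$, so $2D_{\rr} + (\iota(P)) - (2g+1)(\infty)$ is principal; hence $2\a_{\rr} = P$. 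Put $\Psi(\rr) := \a_{\rr}$.

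Closing: Corollary \ref{bytwo}(ii) applied to the just-established principal divisor shows that no $\W_i$ lies in $\supp D_{\rr}$, i.e., $U_{\rr}(\alpha_i) \ne 0$ for all $i$, so the final formula $r_i = \mathbf{s}_1(\rr) + (-1)^g V_{\rr}(\alpha_i)/U_{\rr}(\alpha_i)$ is well defined; this identity is an immediate rewriting of $r_i = (-1)^g v_D(\alpha_i)/u_D(\alpha_i)$ using $v_D = V_{\rr} + (-1)^g \mathbf{s}_1(\rr) U_{\rr}$, so it also witnesses $\Phi \circ \Psi = \mathrm{id}_{\RR_{1/2,P}}$. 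The most delicate step I anticipate is the parity computation yielding $f - v_D^2 = (x-a)U_{\rr}^2$ cleanly from the splitting of $h_{\rr}$; once that identity is in hand, the divisibility $U_{\rr} \mid f - V_{\rr}^2$, the divisor computation $2\a_{\rr} = P$, and the bijectivity of $\Psi$ all follow by direct substitution.
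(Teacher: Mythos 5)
Your proposal is correct. Its computational core --- the auxiliary polynomial $h_{\rr}(t)$, its decomposition into odd and even parts, and the resulting formulas for $U_{\rr}$ and $V_{\rr}$ in terms of the elementary symmetric functions of $\rr$ --- is the same as the paper's, but your organization of the bijection is genuinely different. The paper only runs the forward direction: from $\a\in M_{1/2,P}$ it produces $\rr$, observes that formulas (11)--(12) reconstruct the Mumford representation of $\a$ from $\rr$ (so the map $\a\mapsto\rr$ is injective), and concludes by counting, since $|M_{1/2,P}|=|\RR_{1/2,P}|=2^{2g}$. You instead construct the inverse map on an arbitrary sign choice $\rr$, and the new ingredient this requires is the parity identity
$$A(t)^2-B(t)^2=-h_{\rr}(t)\,h_{\rr}(-t)=\prod_{i=1}^{2g+1}\bigl(t^2-(a-\alpha_i)\bigr),$$
which under $t^2=a-x$ yields $f(x)-v_D(x)^2=(x-a)U_{\rr}(x)^2$ and hence the principality of $2D_{\rr}+(\iota(P))-(2g+1)(\infty)$; I checked the signs and this works. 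Your route is longer but proves surjectivity constructively rather than by pigeonhole, which is more robust (and would survive in settings where the count is not available a priori). Two small repairs are needed. First, the intermediate equality $\prod_i r_i=-v_D(a)$ should be $\prod_i r_i=v_D(a)$ (the constant term of $h_{\rr}$ is $-\prod_i r_i=-v_D(a)=b$), though your conclusion $\prod_i r_i=-b$ is correct. Second, your identification of the zero divisor of $y-v_D(x)$ with $(\iota(P))+2D_{\rr}$ tacitly uses that no $c_j$ equals any $\alpha_i$, i.e.\ $U_{\rr}(\alpha_i)\ne 0$, before you have established it: you obtain that fact afterwards from Corollary \ref{bytwo}, whose hypothesis is the very principality you are in the middle of proving. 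Break the circle by deducing $U_{\rr}(\alpha_i)\ne0$ directly from $f-v_D^2=(x-a)U_{\rr}^2$: if $U_{\rr}(\alpha_i)=0$ then $(x-\alpha_i)^2$ divides $f-v_D^2$ while $(x-\alpha_i)$ divides $f$, so $(x-\alpha_i)$ divides $v_D$ and therefore $(x-\alpha_i)^2$ divides $f$, contradicting the squarefreeness of $f$.
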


\begin{proof}
Actually we have already proven all the assertions of Theorem \ref{main} except the last formula for $\rr_i$.
It follows from (4) and (5) that
$$\rr_i=(-1)^g \frac{v_{D(\a_{\rr})}(\alpha_i)}{u_{D(\a_{\rr})}(\alpha_i)}=
(-1)^g \frac{v_{D(\a_{\rr})}(\alpha_i)}{U_{\rr}(\alpha_i)}.$$
It follows from (12) that
$$v_{D(\a_{\rr})}(x)=(-1)^{g}\mathbf{s}_1(\rr) U_{\rr}(x)+V_{\rr}(x).$$
This implies that
$$\rr_i=(-1)^g\frac{(-1)^{g}\mathbf{s}_1(\rr) U_{\rr}(\alpha_i)+V_{\rr}(\alpha_i)}{U_{\rr}(\alpha_i)}=\mathbf{s}_1(\rr)+(-1)^g\frac{V_{\rr}(\alpha_i)}{U_{\rr}(\alpha_i)}.$$
\end{proof}

\begin{cor}
\label{s1r}
We keep the notation and assumptions of Theorem \ref{main}. Then
$$2g\cdot  \mathbf{s}_1(\rr)=(-1)^{g+1}\sum_{i=1}^{2g+1}\frac{V_{\rr}(\alpha_i)}{U_{\rr}(\alpha_i)}.$$
In particular, if $\fchar(K)$ does not divide $g$ then
 $$\mathbf{s}_1(\rr)=\frac{(-1)^{g+1}}{2g} \cdot\sum_{i=1}^{2g+1}\frac{V_{\rr}(\alpha_i)}{U_{\rr}(\alpha_i)}.$$
 On the other hand, if $\fchar(K)$ divides $g$ then
 $$\sum_{i=1}^{2g+1}\frac{V_{\rr}(\alpha_i)}{U_{\rr}(\alpha_i)}=0.$$
\end{cor}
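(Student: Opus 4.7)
The plan is to simply sum the last identity of Theorem \ref{main} over $i=1,\dots,2g+1$. That identity reads
$$\rr_i=\mathbf{s}_1(\rr)+(-1)^g\frac{V_{\rr}(\alpha_i)}{U_{\rr}(\alpha_i)},$$
and summing gives
$$\sum_{i=1}^{2g+1}\rr_i=(2g+1)\,\mathbf{s}_1(\rr)+(-1)^g\sum_{i=1}^{2g+1}\frac{V_{\rr}(\alpha_i)}{U_{\rr}(\alpha_i)}.$$
By definition $\sum_{i=1}^{2g+1}\rr_i=\mathbf{s}_1(\rr)$, so transposing the $\mathbf{s}_1(\rr)$ terms yields
$$-2g\cdot\mathbf{s}_1(\rr)=(-1)^g\sum_{i=1}^{2g+1}\frac{V_{\rr}(\alpha_i)}{U_{\rr}(\alpha_i)},$$
which is the claimed identity after multiplying by $-1$.

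The two supplementary assertions follow formally. If $\fchar(K)\nmid g$, then $2g$ is invertible in $K$ (recall $\fchar(K)\ne 2$), and we may divide through to solve for $\mathbf{s}_1(\rr)$. If $\fchar(K)\mid g$, then $2g=0$ in $K$, so the left-hand side vanishes and hence so must the sum on the right (since $(-1)^g$ is a unit).

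There is no real obstacle here: the only input is the already-established last formula of Theorem \ref{main}, and the rest is the elementary observation that the sum $\rr_1+\cdots+\rr_{2g+1}$ is precisely $\mathbf{s}_1(\rr)$.
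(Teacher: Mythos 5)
Your proposal is correct and is essentially identical to the paper's own argument: both sum the identity $\rr_i=\mathbf{s}_1(\rr)+(-1)^g V_{\rr}(\alpha_i)/U_{\rr}(\alpha_i)$ over all $i$, use $\sum_i \rr_i=\mathbf{s}_1(\rr)$, and rearrange to isolate $2g\cdot\mathbf{s}_1(\rr)$. The two supplementary assertions are handled the same way in both.
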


\begin{proof}
It follows from the last assertion of Theorem \ref{main} that
$$\mathbf{s}_1(\rr)=\sum_{i=1}^{2g+1}\rr_i=\sum_{i=1}^{2g+1}\left(\mathbf{s}_1(\rr)+(-1)^g\frac{V_{\rr}(\alpha_i)}{U_{\rr}(\alpha_i)}\right)=$$
$$(2g+1)\mathbf{s}_1(\rr)+(-1)^{g}\sum_{i=1}^{2g+1}\frac{V_{\rr}(\alpha_i)}{U_{\rr}(\alpha_i)}.$$
This implies that
$$0=2g \cdot \mathbf{s}_1(\rr)+(-1)^{g}\sum_{i=1}^{2g+1}\frac{V_{\rr}(\alpha_i)}{U_{\rr}(\alpha_i)},$$
i.e.,
$$2g \cdot\mathbf{s}_1(\rr)=(-1)^{g+1}\sum_{i=1}^{2g+1}\frac{V_{\rr}(\alpha_i)}{U_{\rr}(\alpha_i)}.$$
\end{proof}

\begin{cor}
\label{s1rbis}
We keep the notation and assumptions of Theorem \ref{main}. Let $i,l$ be two distinct integers such that
$$1 \le i, l\le 2g+1.$$
Then
$$\mathbf{s}_1(\rr)=\frac{(-1)^g}{2}\times \frac{\left(\alpha_l +\left(\frac{V_{\rr}(\alpha_l)}{U_{\rr}(\alpha_l)}\right)^2\right)-\left(\alpha_i +\left(\frac{V_{\rr}(\alpha_i)}{U_{\rr}(\alpha_i)}\right)^2\right)}{ \left(\frac{V_{\rr}(\alpha_i)}{U_{\rr}(\alpha_i)}-\frac{V_{\rr}(\alpha_l)}{U_{\rr}(\alpha_l)}\right)}.$$
\end{cor}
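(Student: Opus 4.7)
The plan is to exploit two facts in combination: first, the last formula of Theorem \ref{main},
$$\rr_i=\mathbf{s}_1(\rr)+(-1)^g\frac{V_{\rr}(\alpha_i)}{U_{\rr}(\alpha_i)},$$
which expresses each $\rr_i$ as an affine function of the single unknown $\mathbf{s}_1(\rr)$; and second, the defining relation $\rr_i^2=a-\alpha_i$. Setting $s:=\mathbf{s}_1(\rr)$ and $w_i:=(-1)^g V_{\rr}(\alpha_i)/U_{\rr}(\alpha_i)$, the two facts together say $(s+w_i)^2=a-\alpha_i$, i.e.
$$\alpha_i+w_i^2+2sw_i=a-s^2.$$
The right-hand side does not depend on $i$, so this is a family of $2g+1$ relations sharing one unknown $a-s^2$; we get a \emph{linear} equation for $s$ by eliminating that common term.

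First, I would write the above identity for the two specified indices $i$ and $l$, and subtract them to kill $a-s^2$:
$$2s(w_i-w_l)=(\alpha_l+w_l^2)-(\alpha_i+w_i^2).$$
Next, I would solve for $s$ (using that $w_i\ne w_l$, which is forced by $\alpha_i\ne\alpha_l$ together with $\rr_i\ne\rr_l$ — the latter holds since their squares $a-\alpha_i$ and $a-\alpha_l$ differ):
$$s=\frac{(\alpha_l+w_l^2)-(\alpha_i+w_i^2)}{2(w_i-w_l)}.$$
Finally, I would substitute back $w_i=(-1)^g V_{\rr}(\alpha_i)/U_{\rr}(\alpha_i)$. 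In the denominator the factor $(-1)^g$ pulls out as $1/(-1)^g=(-1)^g$, while in the numerator $w_i^2=(V_{\rr}(\alpha_i)/U_{\rr}(\alpha_i))^2$ is independent of the sign. This yields exactly the claimed formula.

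There is no real obstacle here: the result is a one-line algebraic manipulation once one recognizes that the identity $\rr_i^2=a-\alpha_i$, read through the Theorem \ref{main} formula, gives an affine-to-quadratic overdetermined system in $s$, and the simplest way to extract $s$ is pairwise subtraction. The only small thing to record is that $U_{\rr}(\alpha_i)$ and $U_{\rr}(\alpha_l)$ are nonzero (by the relative-primality assertion in Theorem \ref{main}), so the quotients $V_{\rr}(\alpha_i)/U_{\rr}(\alpha_i)$ are well defined, and that $w_i\neq w_l$, which is immediate from $\rr_i^2\ne\rr_l^2$.
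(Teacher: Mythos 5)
Your proof is correct and follows essentially the same route as the paper: both use the identity $\rr_i=\mathbf{s}_1(\rr)+(-1)^g V_{\rr}(\alpha_i)/U_{\rr}(\alpha_i)$ together with $\rr_i^2=a-\alpha_i$, subtract the relations for the indices $i$ and $l$ to eliminate the common term, and solve the resulting linear equation for $\mathbf{s}_1(\rr)$. Your justification that the denominator is nonzero (via $\rr_i^2\ne\rr_l^2$) and the sign bookkeeping with $1/(-1)^g=(-1)^g$ are exactly as in the paper.
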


\begin{proof}
We have
$$\rr_i=\mathbf{s}_1(\rr)+(-1)^g\frac{V_{\rr}(\alpha_i)}{U_{\rr}(\alpha_i)}, \ \rr_l=\mathbf{s}_1(\rr)+(-1)^g\frac{V_{\rr}(\alpha_l)}{U_{\rr}(\alpha_l)}.$$
Recall that 
$$\rr_i^2=a-\alpha_i \ne a-\alpha_l=\rr_l^2.$$
In particular, 
$$\rr_i\ne \rr_l \  \text{ and therefore } \ \frac{V_{\rr}(\alpha_i)}{U_{\rr}(\alpha_i)}\ne \frac{V_{\rr}(\alpha_l)}{U_{\rr}(\alpha_l)}.$$
We have
$$\alpha_l -\alpha_i=(a-\alpha_i)-(a-\alpha_l)=\rr_i^2- \rr_l^2=$$
$$\left(\mathbf{s}_1(\rr)+(-1)^g\frac{V_{\rr}(\alpha_i)}{U_{\rr}(\alpha_i)}\right)^2-\left(\mathbf{s}_1(\rr)+(-1)^g\frac{V_{\rr}(\alpha_l)}{U_{\rr}(\alpha_l)}\right)^2=$$
$$(-1)^g\cdot 2 \cdot \mathbf{s}_1(\rr)\cdot \left(\frac{V_{\rr}(\alpha_i)}{U_{\rr}(\alpha_i)}-\frac{V_{\rr}(\alpha_l)}{U_{\rr}(\alpha_l)}\right)+
\left(\frac{V_{\rr}(\alpha_i)}{U_{\rr}(\alpha_i)}\right)^2-\left(\frac{V_{\rr}(\alpha_l)}{U_{\rr}(\alpha_l)}\right)^2.$$
This implies that
$$(-1)^g\cdot 2 \cdot \mathbf{s}_1(\rr)\cdot \left(\frac{V_{\rr}(\alpha_i)}{U_{\rr}(\alpha_i)}-\frac{V_{\rr}(\alpha_l)}{U_{\rr}(\alpha_l)}\right)=
\left(\alpha_l +\left(\frac{V_{\rr}(\alpha_l)}{U_{\rr}(\alpha_l)}\right)^2\right)-\left(\alpha_i +\left(\frac{V_{\rr}(\alpha_i)}{U_{\rr}(\alpha_i)}\right)^2\right).$$
This means that
$$\mathbf{s}_1(\rr)=\frac{(-1)^g}{2}\times \frac{\left(\alpha_l +\left(\frac{V_{\rr}(\alpha_l)}{U_{\rr}(\alpha_l)}\right)^2\right)-\left(\alpha_i +\left(\frac{V_{\rr}(\alpha_i)}{U_{\rr}(\alpha_i)}\right)^2\right)}{ \left(\frac{V_{\rr}(\alpha_i)}{U_{\rr}(\alpha_i)}-\frac{V_{\rr}(\alpha_l)}{U_{\rr}(\alpha_l)}\right)}.$$
\end{proof}

\begin{rem}
\label{minusRR}
Let $\rr=(\rr_1, \dots , \rr_{2g+1})\in \RR_{1/2,P}$ with $P=(a,b)$.  Then 
for all $i=1, \dots, 2g, 2g+1$
$$(-\rr_i)^2=\rr_i^2=a-\alpha_i$$
and
$$\prod_{i=1}^{2g+1}(-\rr_i)=(-1)^{2g+1}\prod_{i=1}^{2g+1}\rr_i=-(-b)=b.$$
This means that
 $$-\rr=(-\rr_1, \dots , -\rr_{2g+1})\in \RR_{1/2,\iota(P)}$$
(recall that $\iota(P)=(a, -b)$). It follows from Theorem \ref{main} that
$$U_{-\rr}(x)=U_{\rr}(x), \ V_{-\rr}(x)=-V_{\rr}(x)$$
and therefore $\a_{-\rr}= -\a_{\rr}$.
\end{rem}

\begin{rem}
\label{fromUVtoR}
The last assertion of Theorem \ref{main} combined with  Corollary \ref{s1rbis} allow us to reconstruct explicitly $\rr=(\rr_1, \dots, \rr_{2g+1})$ and 
$P=(a,b)$ if we
are given the polynomials $U_{\rr}(x), V_{\rr}(x)$ (and, of course, $\{\alpha_1, \dots, \ \alpha_{2g+1}\}$).
\end{rem}

\begin{ex}
Let us take as $P=(a,b)$ the point $\W_{2g+1}=(\alpha_{2g+1},0)$.  Then $b=0$ and  $\rr_{2g+1}=0$. We have $2^{2g}$ arbitrary independent choices of (nonzero) square roots $\rr_i=\sqrt{\alpha_{2g+1}-\alpha_i}$ with $1 \le i \le 2g$ (and always get an element of $\RR_{1/2,P}$).  Now Theorem \ref{main} gives us (if we put $a=\alpha_{2g+1},b=0$)  all $2^{2g}$  points $\a_{\rr}$ of order 4 in $J(K)$ with $2\a_{\rr}=\W_{2g+1}$. Namely, let $s_i$ be the $i$th basic symmetric function in  $(\rr_1, \dots, \rr_{2g})$. Then the Mumford representation $(U_{\rr},V_{\rr})$ of $\a_{\rr}$ is given by 
$$
U_{\rr}(x)=(-1)^g \left[(\alpha_{2g+1}-x)^g+\sum_{j=1}^g s_{2j}\cdot (\alpha_{2g+1}-x)^{g-j}\right],$$
$$V_{\rr}(x)=\sum_{j=1}^g \left(s_{2j+1}-s_1 s_{2j}\right)(\alpha_{2g+1}-x)^{g-j}.$$
In particular, if $\alpha_{2g+1}=0$ then
$$\rr_i=\sqrt{-\alpha_i} \ \text{ for all } i=1,  \dots, 2g,$$
$$
U_{\rr}(x)=x^g+\sum_{j=1}^g (-1)^j s_{2j} x^{g-j},$$
$$V_{\rr}(x)=\sum_{j=1}^g \left(s_{2j+1}-s_1 s_{2j}\right)(-x)^{g-j}.$$
\end{ex}

\end{document}